\newtheorem{theorem}{Theorem}
\newtheorem{corollary}[theorem]{Corollary}
\newtheorem{proposition}[theorem]{Proposition}
\theoremstyle{definition}
\newtheorem{definition}[theorem]{Definition}
\theoremstyle{remark}
\theoremstyle{definition}
\newtheorem{example}[theorem]{Examples}
\newcommand{\Real}{\mathbb{R}}
\newcommand{\Circle}{\mathbb{S}^{1}}
\newcommand{\id}{\mathrm{id}}
\newcommand{\gstar}{\mathfrak{g}^{*}}
\newcommand{\DiffS}{\mathrm{Diff}^{\infty}(\mathbb{S}^{1})}
\newcommand{\VectS}{\mathrm{Vect}^{\infty}(\mathbb{S}^{1})}
\newcommand{\CS}{\mathrm{C}^{\infty}(\mathbb{S}^{1})}
\newcommand{\Vect}{\mathrm{Vect}}
\begin{document}

%\title[Euler equations]{The periodic $\mathbf{b}\,$-equation and\\ Euler equations on the circle}%
\title[The periodic $\mathbf{b}\,$-equation and Euler equations on the circle]{The periodic $\mathbf{b}\,$-equation and\\ Euler equations on the circle}%

%    Information for first author
\author[J. Escher]{Joachim Escher} %
\address{Institute for Applied Mathematics, University of Hannover, D-30167 Hannover, Germany} %
\email{escher@ifam.uni-hannover.de} %

%    Information for second author
\author[J. Seiler]{J\"org Seiler} %
\address{Department of Mathematical Sciences,
Loughborough University,
Leicestershire LE11 3TU, United Kingdom
} %
\email{J.Seiler@lboro.ac.uk}%

\subjclass[2000]{35Q53, 58D05} % to be checked
\keywords{Euler equation, diffeomorphisms group of the circle, Degasperis-Procesi equation} %

%\date{}%
%\dedicatory{}%
%\commby{}%

% ----------------------------------------------------------------

\begin{abstract}
In this note we show that the periodic $b$-equation can only be realized as an
Euler equation on the Lie group $\DiffS$ of all smooth and orientiation preserving 
diffeomorphisms on the cirlce if $b=2$, i.e.\ for the Camassa-Holm equation. In this
case the inertia operator generating the metric on $\DiffS$ is given by $A=1-\partial_{x}^2$. 
In contrast, the Degasperis-Procesi equation, for which  $b=3$,  is not an Euler 
equation on $\DiffS$ for any inertia operator. 
Our result generalizes a recent result of B.\ Kolev \cite{Kol09}.
\end{abstract}

\maketitle

% ----------------------------------------------------------------

\section{Introduction}
\label{sec:intro}

In this note we are interested in the geometric interpretation of the so-called $b$\,-equation 
\begin{equation}\label{eq:b-equation}
    m_{t} = - (m_{x}u + bmu_{x}),\qquad t\in\Real,\ x\in\mathbb{S}^1,
\end{equation}
with the momentum variable $m$ given by
\begin{equation*}
    m = u - u_{xx},
\end{equation*}
and where $b$ stands for a real parameter,
cf.\  \cite{DP99, DHH02, HW03}. It was shown in \cite{DP99, HW03, MN02, Iva05} that equation \eqref{eq:b-equation}  is   
asymptotically integrable, a necessary condition for complete integrability, but only for the values $b=2$ and $b=3$.
In case $b=2$ we recover the Camassa-Holm equation (CH)
\begin{equation}\label{eq:CH}
    u_{t} - u_{txx} + 3uu_{x} - 2u_{x}u_{xx} - uu_{xxx} = 0,
\end{equation}
while for $b=3$ we obtain the Degasperis-Procesi equation (DP)
%\begin{equation}\label{eq:DP}
\begin{equation*}
    u_{t} - u_{txx} + 4uu_{x} - 3u_{x}u_{xx} - uu_{xxx} = 0.
\end{equation*}

Independent of (asymptotical) integrability, equation \eqref{eq:b-equation}  possesses some
hydrodynamic relevance, as described for instance in~\cite{Joh03,Iva07, CL09}. Each of these equations models the unidirectional  
irrotational free surface flow of a shallow layer of an inviscid fluid moving under the influence of gravity over a flat bed. 
In these models, $u(t,x)$ represents the wave's height at the moment $t$ and at position $x$ above the flat bottom.

The periodic Camassa-Holm equation is known to correspond to the geodesic flow  with respect to the metric induced by the 
inertia operator $1-\partial_x^2$ on the diffeomorphism group of the circle, cf.\  \cite{Kou99}. Local existence of the geodesics 
and properties of the Riemannian exponential map were studied in \cite{CK02, CK03}. 
The whole family of $b\,$-equations and in particular (DP) can be realized as (in general) non-metric Euler equations, 
i.e.\ as geodesic flows with respect to a linear connection which is not necessarily Riemannian in the sense that there 
may not exist a Riemannian metric which is preserved by this connection, cf.\  \cite{EK10}. 

Besides various common properites of the individual members of the $b$\,-equation, there are also significant differences to report on. 
It is known that solutions of the CH-equation  preserve the $H^1$-norm in time and that (CH) 
possesses global in time weak solutions, cf.\ \cite{CE98, CE98c, BC07}. In particular there are no shock waves for (CH), although finite 
time blow of classical solutions occurs, but in form of wave breaking: solutions to (CH) stay continuous and bounded but their slopes may 
blow up in finite time, cf.\  \cite{CE98b, CE00}. Wave breaking is also observed for the (DP) but in a weaker form. It seems that the 
$H^1$-norm of solutions of (DP) cannot be uniformly bounded, but $L_\infty$-bounds for large classes 
of initital values are available \cite{ElY06, ELY06b, EY08}. Moreover shock waves, i.e.\ discontinuous global travelling wave soltions 
are known to exist. Indeed it was shown in \cite{E07} that
\begin{equation*}
    u_c(t,x)=\frac{\sinh\left(x-[x]-{1/2}\right)}{t\cosh\left({1/2}\right)+c\sinh\left({1/2}\right)}, \quad x\in\Real/\mathbb{Z},
\end{equation*}
is for any $c>0$ a global weak solution to the (DP) equation.

In this note we disclose a further difference between the (CH) and the (DP) equation, by proving that in a fairly large class of 
Riemannian metrics on $\DiffS$ it is impossible to realize (DP) as a geodesic flow.
\medskip

The note is organized as follows. In Section~\ref{sec:general case}, we first introduce the concept geodesic flows and Euler equations 
on a general Lie group. Subsequently, in Section~\ref{sec:diff circle},  the important special case of $\DiffS$ is discussed and 
Section~\ref{sec: b-eq} contains the proof of our main result. 

% ----------------------------------------------------------------

\section{The Euler equation on a general Lie group}
\label{sec:general case}
\noindent
In his famous article \cite{Arn66} Arnold established a deep geometrical connection between the Euler equations for a perfect fluid in two and three 
dimensions and the geodesic flow for right-invariant metrics on the Lie group of volume-preserving diffeomorphisms. 
After Arnold's fundamental work a lot of effort was  devoted to understand the geometric structure of other physical systems with a Lie group as configuration space. 

The general Euler equation was derived initially for the Levi-Civita connection of a one-sided invariant Riemannian metric on a Lie group $G$ 
(see \cite{Arn66} or \cite{AK98}) but the theory is even valid in the more general setting of a one-sided invariant linear connection, see \cite{EK10}.

A right invariant metric on a Lie group $G$ is determined by its value at the unit element $e$ of the group, i.e.\ by an inner product on its Lie algebra $\mathfrak{g}$. 
This inner product can be expressed in terms of  a symmetric linear operator $A\,: \mathfrak{g}\to \gstar$, i.e.
\begin{equation*}
 \langle Au, v\rangle=\langle Av, u\rangle,\qquad{\hbox{for all}}\ u,\, v\in \mathfrak{g},
\end{equation*}
where $\gstar$ is the dual space of $\mathfrak{g}$ and $\langle\cdot,\cdot\rangle$ denotes the duality pairing on $\gstar\times\mathfrak{g}$. 
Each symmetric isomorphism  $A\,: \mathfrak{g}\to \gstar$ is called an {\em{inertia operator}} on $G$. The corresponding metric on $G$ induced by $A$ is denoted by $\rho_A$. 

Let $\nabla$ denote the Levi-Civita connection on $G$ induced by $\rho_A$. Then

\begin{equation}\label{eq:connection}
    \nabla_{\xi_{u}} \, \xi_{v} = \frac{1}{2} [\xi_{u},\xi_{v}] + B(\xi_{u},\xi_{v}),
\end{equation}
where $\xi_u$ is the right invariant vector field on $G$, generated by $u\in\mathfrak{g}$. Moreover, $[\cdot,\cdot]$ is the Lie bracket on $\Vect(G)$, 
the smooth sections of the tangent bundle over $G$, and the bilinear operator $B$ is called {\em{Christoffel operator}}. It is defined by the following formula
\begin{equation}\label{christ}
    B(u,v) = \frac{1}{2}\Big[ (\mathrm{ad}_{u})^{*}(v) + (\mathrm{ad}_{v})^{*}(u)\Big],
\end{equation}
where $(\mathrm{ad}_{u})^{*}$ is the adjoint with respect to $\rho_A$ of the natural action of $\mathfrak{g}$ on itself, given by
\begin{equation}\label{ad}
 \mathrm{ad}_{u}\,:\, \mathfrak{g}\to \mathfrak{g}, \quad v\mapsto [u,v].
\end{equation}
A proof of the above statments as well as of the following proposition can be found in \cite{EK10}.

\begin{proposition}
A smooth curve $g(t)$ on a Lie group $G$ is a \emph{geodesic} for a right invariant linear connection $\nabla$ defined 
by~\eqref{eq:connection} iff its Eulerian velocity $u = {g}'\circ g^{-1}$ satisfies the first order equation
\begin{equation}\label{eq:euler}
    u_{t} = - B(u,u).
\end{equation}
Equation \eqref{eq:euler} is known as the \emph{Euler equation}.
\end{proposition}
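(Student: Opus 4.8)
The plan is to translate the geodesic condition $\nabla_{g'}g'=0$ directly into an equation for the Eulerian velocity $u=g'\circ g^{-1}$, using the connection formula \eqref{eq:connection}. First I would record the defining relation between the curve and its Eulerian velocity: by right invariance, $g'(t)=\xi_{u(t)}(g(t))$, i.e.\ the tangent vector to the curve at $g(t)$ is the value at $g(t)$ of the right invariant field generated by $u(t)\in\mathfrak{g}$. Since the map $u\mapsto\xi_u$ from $\mathfrak{g}$ into the right invariant vector fields is a linear isomorphism onto its image, and a right invariant field vanishes precisely when its generator vanishes, the whole argument will reduce to an identity in $\mathfrak{g}$.

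The key step is to compute the covariant derivative $\frac{D}{dt}g'$ of the velocity along the curve. Writing $u(t)=\sum_i u^i(t)e_i$ in a basis $(e_i)$ of $\mathfrak{g}$, so that $g'(t)=\sum_i u^i(t)\,\xi_{e_i}(g(t))$, the Leibniz rule for the covariant derivative of a vector field along a curve gives
\begin{equation*}
    \frac{D}{dt}g'=\sum_i \frac{du^i}{dt}\,\xi_{e_i}(g(t))+\sum_i u^i(t)\,\nabla_{g'(t)}\xi_{e_i}.
\end{equation*}
The first sum is $\xi_{u_t}(g(t))$. For the second, tensoriality of $\nabla$ in its lower slot together with $g'(t)=\xi_{u(t)}(g(t))$ and $\mathbb{R}$-linearity in the upper slot yields $\sum_i u^i\,\nabla_{g'}\xi_{e_i}=\nabla_{\xi_u}\xi_u$ at $g(t)$, the coefficients $u^i(t)$ being constant at fixed $t$. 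Hence $\frac{D}{dt}g'=\big(\xi_{u_t}+\nabla_{\xi_u}\xi_u\big)(g(t))$.

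It then remains to evaluate $\nabla_{\xi_u}\xi_u$ via \eqref{eq:connection}. Since $[\xi_u,\xi_u]=0$ and the Christoffel operator acts on right invariant fields as $B(\xi_u,\xi_u)=\xi_{B(u,u)}$, I obtain $\nabla_{\xi_u}\xi_u=\xi_{B(u,u)}(g(t))$, so that $\frac{D}{dt}g'=\xi_{u_t+B(u,u)}(g(t))$. Invoking the injectivity of $u\mapsto\xi_u$ noted above, $g$ is a geodesic, i.e.\ $\frac{D}{dt}g'=0$, if and only if $u_t+B(u,u)=0$, which is exactly \eqref{eq:euler}. The main obstacle I anticipate is making the Leibniz-rule step rigorous: $G=\DiffS$ is an infinite dimensional Fr\'echet Lie group, so the naive finite basis $(e_i)$ is unavailable and one must justify the splitting of $\frac{D}{dt}g'$ into a time-derivative term and a connection term intrinsically, as well as check that the formal identity $B(\xi_u,\xi_u)=\xi_{B(u,u)}$ is compatible with the right invariant extension of $B$ underlying \eqref{eq:connection}.
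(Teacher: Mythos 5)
Your proof is correct, but note that the paper itself offers no argument for this proposition: it is quoted with a pointer to \cite{EK10}, so there is no in-paper proof to compare against. Your derivation is the standard one, and its core identity, $\tfrac{D}{dt}g'=\xi_{u_t+B(u,u)}(g(t))$, is exactly what underlies the cited reference. The one issue you correctly flag --- that the basis expansion $u(t)=\sum_i u^i(t)e_i$ is unavailable when $G$ is the Fr\'echet Lie group $\DiffS$, which is the only case the paper actually uses --- is genuine, but it has a standard repair that removes the basis altogether: trivialize $TG\simeq G\times\mathfrak{g}$ by right translation, i.e.\ write any smooth vector field along $g(t)$ as $V(t)=\xi_{w(t)}(g(t))$ with a smooth curve $w(t)\in\mathfrak{g}$. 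Right invariance of $\nabla$, the Leibniz rule, and \eqref{eq:connection} then force (and, in the Fr\'echet setting of \cite{EK10}, are used to \emph{define}) the covariant derivative along the curve to be
\begin{equation*}
\frac{DV}{dt}=\xi_{\,w_t+\frac{1}{2}[u,w]+B(u,w)}\big(g(t)\big),
\end{equation*}
which in finite dimensions is precisely what your Leibniz-rule computation establishes. Taking $w=u$ (so $V=g'$) and using $[u,u]=0$ gives $\tfrac{D}{dt}g'=\xi_{u_t+B(u,u)}(g(t))$, and injectivity of $w\mapsto\xi_w(g)$ (it is $T_eR_g$, an isomorphism) finishes the equivalence, exactly as in your last step. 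So your argument is complete as stated for finite-dimensional $G$, and becomes complete in general once the basis step is replaced by, or taken as the definition via, the trivialized formula above; the reading you adopt, $B(\xi_u,\xi_v)=\xi_{B(u,v)}$, is indeed the only sensible interpretation of \eqref{eq:connection}, since \eqref{christ} defines $B$ on $\mathfrak{g}$ rather than on vector fields.
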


% ----------------------------------------------------------------

\section{The Euler equation on $\DiffS$}
\label{sec:diff circle}
\noindent
Since the tangent bundle $T\Circle \simeq \Circle \times \Real$ is trivial, $\VectS$, the space of smooth vector fields on $\Circle$, 
can be identified with $\CS$, the space of real smooth functions on $\Circle$. Furthermore, the group $\DiffS$ is naturally equipped 
with a Fr\'{e}chet manifold structure modeled over $\CS$, cf.\ \cite{EK10}.  
The Lie bracket on $\VectS \simeq \CS$ is given by\footnote{Notice that this bracket differs from the usual bracket of vector fields by a sign.}
\begin{equation*}
    [u,v] = u_{x}v - uv_{x} .
\end{equation*}

The topological dual space of $\VectS\simeq \CS$ is given by the distributions $\Vect'(\mathbb{S}^1)$ on $\mathbb{S}^1$. 
In order to get a convenient representation of the Christoffel operator $B$ we restrict ourselves to $\Vect^\ast(\mathbb{S}^1)$, 
the set of all regular distributions which may be represented by smooth densities, i.e.\ $T\in\Vect^\ast(\mathbb{S}^1)$ iff
there is a $\varrho\in\CS$ such that 
\begin{equation*}
 \langle T,\varphi\rangle=\int_{\mathbb{S}^1}\varrho\varphi\,dx\quad\hbox{for all}\quad  \varphi\in\CS.
\end{equation*} 

By Riesz' representation theorem we may identify
$\Vect^\ast(\mathbb{S}^1)\simeq \CS$. In the following we denote by $\mathcal{L}_{is}^{sym}(\CS)$ the set of all continuous isomorphisms 
on $\CS$, which are symmetric with respect to the $L_2(\mathbb{S}^1)$ inner product.

\begin{definition} 
Each $A\in\mathcal{L}_{is}^{sym}(\CS)$ is called a {\em{regular inertia operator}} on $\DiffS$.
\end{definition}

\begin{proposition}
Given $A\in\mathcal{L}_{is}^{sym}(\CS)$, we have that
\begin{equation*}
 B(u,v)=\frac{1}{2}A^{-1}[2Au\cdot v_x + 2Av\cdot u_x+ u\cdot (Av)_x+ v\cdot(Au)_x]
\end{equation*}
for all $u,\;v\in \CS$.
\end{proposition}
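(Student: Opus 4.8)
The plan is to derive the stated formula directly from the general expression~\eqref{christ} for the Christoffel operator, namely $B(u,v) = \tfrac{1}{2}\big[(\mathrm{ad}_{u})^{*}(v) + (\mathrm{ad}_{v})^{*}(u)\big]$, by computing the adjoint $(\mathrm{ad}_{u})^{*}$ explicitly. Since $B$ is symmetric in its two arguments by construction, it suffices to obtain a closed expression for $(\mathrm{ad}_{u})^{*}(v)$ and then to symmetrise in $u$ and $v$ at the very end.

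First I would unravel the defining relation of the adjoint. Under the identifications $\VectS \simeq \CS$ and $\Vect^{\ast}(\mathbb{S}^{1}) \simeq \CS$, the metric induced by $A$ reads $\rho_{A}(v,w) = \langle Av, w\rangle = \int_{\mathbb{S}^{1}}(Av)\,w\,dx$, and the operator from~\eqref{ad} is $\mathrm{ad}_{u}\,w = [u,w] = u_{x}w - u w_{x}$. By definition of the adjoint with respect to $\rho_{A}$, the function $(\mathrm{ad}_{u})^{*}(v)$ is characterised by
\begin{equation*}
    \int_{\mathbb{S}^{1}}\big(A(\mathrm{ad}_{u})^{*}(v)\big)\,w\,dx = \rho_{A}\big(v,\mathrm{ad}_{u}\,w\big) = \int_{\mathbb{S}^{1}}(Av)\,(u_{x}w - u w_{x})\,dx
\end{equation*}
for every $w \in \CS$.

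The core of the argument is to rewrite the right-hand side so that $w$ appears as a bare factor. Writing $p := Av \in \CS$ and integrating the term $-\int_{\mathbb{S}^{1}} p\,u\,w_{x}\,dx$ by parts over the compact manifold $\mathbb{S}^{1}$, where no boundary terms arise, gives $-\int_{\mathbb{S}^{1}} p\,u\,w_{x}\,dx = \int_{\mathbb{S}^{1}} (p u)_{x}\,w\,dx = \int_{\mathbb{S}^{1}} (p_{x}u + p u_{x})\,w\,dx$. Collecting terms yields $\int_{\mathbb{S}^{1}} p(u_{x}w - u w_{x})\,dx = \int_{\mathbb{S}^{1}} (2 p u_{x} + u p_{x})\,w\,dx$, so that the displayed identity holds for all $w$ precisely when $A(\mathrm{ad}_{u})^{*}(v) = 2(Av)\,u_{x} + u\,(Av)_{x}$. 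Since $A \in \mathcal{L}_{is}^{sym}(\CS)$ is invertible on $\CS$, this gives the explicit formula $(\mathrm{ad}_{u})^{*}(v) = A^{-1}\big[2(Av)\,u_{x} + u\,(Av)_{x}\big]$.

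Finally I would substitute this expression, together with the one obtained by exchanging the roles of $u$ and $v$, into $B(u,v) = \tfrac{1}{2}\big[(\mathrm{ad}_{u})^{*}(v) + (\mathrm{ad}_{v})^{*}(u)\big]$; collecting the four resulting terms inside a single application of $A^{-1}$ reproduces the claimed identity. I do not anticipate any genuine obstacle: the only points requiring care are the use of the $L_{2}(\mathbb{S}^{1})$-symmetry of $A$ to guarantee that $\rho_{A}$ is symmetric, so that the adjoint may legitimately act in either slot, the vanishing of boundary terms in the integration by parts thanks to periodicity, and the verification that $2(Av)u_{x} + u(Av)_{x}$ lies in $\CS$ so that applying $A^{-1}$ is justified.
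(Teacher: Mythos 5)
Your proof is correct and takes essentially the same route as the paper: compute $\rho_A(v,\mathrm{ad}_u w)=\int_{\mathbb{S}^1}(Av)(u_xw-uw_x)\,dx$, integrate by parts over $\mathbb{S}^1$ to isolate $w$, read off the adjoint, and symmetrize via \eqref{christ}. If anything you are slightly more careful than the paper, whose intermediate display states $(\mathrm{ad}_{u})^{\ast}v = 2(Av)u_x + u(Av)_x$ with the factor $A^{-1}$ left implicit, whereas you correctly record $(\mathrm{ad}_{u})^{\ast}(v)=A^{-1}\bigl[2(Av)u_x+u(Av)_x\bigr]$ before symmetrizing.
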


\begin{proof}
Let $u,\,v,\,w\in\CS$ be given. Recalling (\ref{ad}), integration by parts yields
\begin{align*}
    \rho_A(\mathrm{(ad}_{u})^\ast v,w) & = \rho_A(v,\mathrm{ad}_{u}w)= \int_{\mathbb{S}^1}A v\cdot(u_xw-uw_x)\, dx \\
		& = \int_{\mathbb{S}^1}\left[(Av)u_x+((Av)\cdot u)_x\right] w\,dx.
\end{align*}
This shows that 
\begin{equation*} 
 \mathrm{(ad}_{u})^\ast v = 2(Av) u_x + u(Av)_x. 
\end{equation*}
Symmetrization of this formula completes the proof, cf.\  \eqref{christ}.
\end{proof}

\begin{example}\label{exs} 
{\rm{It may be instructive to discuss two paradigmatic examples. 
  \begin{enumerate}
   \item First we choose $A=\id$. Then $B(u,u)=-3uu_x$ and the corresponding Euler equation $u_t+3uu_x=0$ 
    is known as the periodic inviscid Burgers equation, see e.g.\  \cite{Bur48,Kat75}.
   \item Next we choose $A=\id -\partial_x^2$. Then the Euler equation reads as 
    $u_t=-(1-\partial_x^2)^{-1}\left(3uu_{x} - 2u_{x}u_{xx} - uu_{xxx}\right)$, which equivalent to the 
    periodic Camassa-Holm equation, cf.\  \eqref{eq:CH}.
 \end{enumerate}
}}
\end{example}

% ----------------------------------------------------------------

\section{The family of $b$-equations}\label{sec: b-eq}

Each $A\in\mathcal{L}_{is}^{sym}(\CS)$ induces an Euler equation on $\DiffS$. Conversely, 
given $b\in\Real$, we may ask whether there exists a regular inertia operator such that 
the $b$\,-equation is the corresponding Euler equation on $\DiffS$. We know from 
Example \ref{exs}.2 that the answer is positive if $b=2$. The following result shows that 
the answer is negative if $b\ne 2$.

\begin{theorem}\label{main} 
Let $b\in\Real$ be given and suppose that there is a regular inertia operator $A\in\mathcal{L}_{is}^{sym}(\CS)$ such that the $b\,$-equation
\begin{equation*}
 m_t=-(m_x u+bm u_x),\qquad m=u-u_{xx}
\end{equation*}
is the Euler equation on $\DiffS$ with respect to $\rho_A$. Then  $b=2$ and $A=\mathrm{id}-\partial_{x}^2$. 
\end{theorem}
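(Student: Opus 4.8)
The plan is to convert the hypothesis into a single identity that the operator $A$ must satisfy for every $u$, and then to decode it in the Fourier basis. Writing $\Lambda=1-\partial_x^2$ and $m=\Lambda u$, the $b$-equation is equivalent to $u_t=-\Lambda^{-1}(u\,m_x+b\,m\,u_x)$, whereas the Euler equation is $u_t=-B(u,u)$ with $B(u,u)=A^{-1}\big(2(Au)u_x+u(Au)_x\big)$ by the formula for $B$ established above. Applying $A$, the hypothesis is therefore equivalent to the requirement that
\begin{equation*}
 2(Au)u_x+u(Au)_x=A\Lambda^{-1}\big(u\,m_x+b\,m\,u_x\big),\qquad m=\Lambda u,
\end{equation*}
hold for all $u\in\CS$. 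Both sides being quadratic in $u$, I would polarize to the associated symmetric bilinear identity and then, after complexifying, feed in the Fourier modes $e_k:=e^{ikx}$, writing $Ae_k=\sum_j a_{jk}e_j$ for the matrix of $A$.

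To determine $b$ and the diagonal of $A$, I would restrict the bilinear Fourier identity to its \emph{diagonal output}, i.e.\ match the coefficient of $e_{p+q}$ when the inputs are $e_p$ and $e_q$. A short computation shows that this component involves only the diagonal entries $a_{kk}$ and reads
\begin{equation*}
 (p+2q)\,a_{pp}+(2p+q)\,a_{qq}=\frac{(p+bq)(1+p^2)+(q+bp)(1+q^2)}{1+(p+q)^2}\,a_{p+q,p+q}.
\end{equation*}
Setting $q=0$ (and using $a_{00}\neq0$, since $A$ is an isomorphism) forces $a_{kk}=\tfrac{2}{b}\,a_{00}\,(1+k^2)$ for $k\neq0$. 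Substituting this back into the generic relation, everything cancels except a factor $(2-b)\big(q(1+p^2)+p(1+q^2)\big)$, which must vanish identically; taking $p=q=1$ yields $b=2$. With $b=2$ the diagonal symbol is exactly $a_{00}(1+k^2)$ for every $k$, i.e.\ the diagonal of $A$ coincides with that of $a_{00}\,(1-\partial_x^2)$.

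It remains to show that $A$ has no off-diagonal part, i.e.\ that $A$ is genuinely this Fourier multiplier. For this I would test the bilinear identity against the constant function $v\equiv1$; since one factor then becomes constant and $\Lambda^{-1}(\Lambda u)_x=u_x$, the identity linearizes. Inserting also $u\equiv1$ shows first that $A1$ is a constant $\alpha$, after which what survives is the commutator relation
\begin{equation*}
 [\partial_x,A]=\big(bA\Lambda^{-1}-2\alpha\big)\partial_x .
\end{equation*}
Evaluated on $e_k$ and compared coefficientwise, this gives $a_{jk}\big((j-k)(1+k^2)-bk\big)=0$ for $j\neq k$, so every off-diagonal entry vanishes except on the sparse set where $(j-k)(1+k^2)=bk$ admits an integer solution $j\neq k$. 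For $b=2$ this set reduces to the two entries $a_{2,1}$ and $a_{-2,-1}$, and I would eliminate these by evaluating the full identity at auxiliary, non-resonant frequencies (for instance $p=2,\ q=-1$ to reach $a_{2,1}$), where the surrounding entries are already known to vanish. This leaves $A=\alpha(1-\partial_x^2)$; since replacing $A$ by a nonzero multiple leaves $B$, and hence the Euler equation, unchanged, we may normalize $\alpha=1$ and obtain $A=1-\partial_x^2$.

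The main obstacle is exactly that $A$ is assumed only to be a symmetric isomorphism of $\CS$, not a differential or translation-invariant operator; the real content is to force this abstract operator to be the concrete multiplier $1-\partial_x^2$. The diagonal restriction cleanly produces $b=2$ and the symbol, but the off-diagonal analysis is delicate: the commutator relation controls all but finitely many resonant entries, and these must be removed by hand from the full identity. I would also keep careful track throughout of the harmless scaling ambiguity in $A$, which is the reason the conclusion can be stated as the specific operator $1-\partial_x^2$.
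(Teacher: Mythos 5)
Your proposal is correct, and it takes a genuinely different route from the paper's proof. The paper first shows $A\mathbf{1}$ is constant, then \emph{linearizes} the quadratic identity \eqref{b-euler} by substituting $u+\lambda$ and letting $\lambda\to\infty$, solves the resulting ODE to get the column structure $Au_n=\beta_n u_n+\gamma_n e^{i\alpha_n x}$ with $\beta_n=2(1+n^2)/b$, and then splits into cases: if all $\gamma_n$ vanish, $A$ is a Fourier multiplier commuting with $L$ and the quadratic identity at $u=u_n$ gives $b=2$, $A=L$; if some $\gamma_p\neq 0$, a combination of periodicity, the $L_2$-symmetry of $A$ and integer arithmetic (the cubic \eqref{rel}) forces $b=-2(1+p^2)$, which is then contradicted by evaluating \eqref{b-euler} at $u_p$. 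Your key observation is different and quite effective: after polarizing, the $e_{p+q}$-coefficient of the bilinear identity at inputs $e_p,e_q$ involves \emph{only} the diagonal entries $a_{pp},a_{qq},a_{p+q,p+q}$ (because the right-hand side applies $A$ to a pure multiple of $e_{p+q}$), so $b=2$ and the diagonal symbol come out \emph{before} anything is said about off-diagonal terms. This collapses the paper's case distinction: the arithmetic of part (c) is never needed, since off-diagonal entries only have to be controlled at $b=2$, where your resonance condition $(j-k)(1+k^2)=2k$ has exactly the two solutions $(j,k)=(2,1)$ and $(-2,-1)$. I checked your asserted formulas (the diagonal relation, its reduction to $(2-b)\bigl(q(1+p^2)+p(1+q^2)\bigr)=0$, the commutator relation obtained from $v=\mathbf{1}$, and the resonance set), and your proposed elimination does work: the bilinear identity at $(p,q)=(2,-1)$ forces both $a_{-2,-1}=0$ (coefficient of $e_0$) and $a_{2,1}=0$ (coefficient of $e_2$). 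Your $v\equiv\mathbf{1}$ polarization is also a cleaner derivation of the paper's linear identity \eqref{eq-lim} than the $\lambda\to\infty$ limit, which needs continuity of $A^{-1}$.

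Two repairs are needed, both minor. First, the justification ``$a_{00}\neq 0$, since $A$ is an isomorphism'' is wrong as stated: an $L_2$-symmetric isomorphism of $\CS$ can have a vanishing diagonal entry. What is true---and what must come \emph{first}, not later as in your write-up---is that inserting $u=v=\mathbf{1}$ gives $(A\mathbf{1})_x=0$, so $A\mathbf{1}=\alpha\mathbf{1}$ is constant, and then $a_{00}=\alpha\neq 0$ by injectivity of $A$. This reordering also settles $b\neq 0$, which your formula $a_{kk}=\tfrac{2}{b}a_{00}(1+k^2)$ silently assumes: if $b=0$, your $q=0$ relation reads $2pa_{00}=0$, contradicting $a_{00}\neq 0$. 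Second, a simplification you missed: the two resonant entries die instantly from the $L_2$-symmetry of $A$, which in your notation reads $a_{jk}=a_{-k,-j}$; thus $a_{2,1}=a_{-1,-2}$, and $(j,k)=(-1,-2)$ is non-resonant, so $a_{2,1}=0$, and likewise $a_{-2,-1}=a_{1,2}=0$. Either way, you arrive at $A=\alpha(1-\partial_x^2)$ with the harmless normalization $\alpha=1$, exactly as the paper does by scaling.
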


\begin{corollary}
The Degasperis-Procesi equation 
\begin{equation*}
 m_t=-(m_x u+3m u_x),\qquad m=u-u_{xx}
\end{equation*}
cannot be realized as an Euler equation for any regular inertia operator $A\in\mathcal{L}_{is}^{sym}(\CS)$.
\end{corollary}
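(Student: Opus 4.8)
The Corollary is immediate from the Theorem: the Degasperis–Procesi equation is the $b$-equation with $b=3\neq 2$, so by the Theorem it cannot be the Euler equation of any regular inertia operator $A\in\mathcal{L}_{is}^{sym}(\CS)$. Hence it suffices to prove the Theorem, which I now sketch. The first move is to put the Euler equation into \emph{momentum form}. Writing $m:=Au$ and using $B(u,u)=A^{-1}\big[2(Au)u_x+u(Au)_x\big]$ from the Proposition, the Euler equation $u_t=-B(u,u)$ is equivalent to
\[ m_t + um_x + 2mu_x = 0, \qquad m=Au . \]
Thus \emph{every} Euler equation on $\DiffS$ already has the shape of a $b$-equation with $b=2$, but with the general momentum $m=Au$ in place of $m=Pu$, where $P:=\id-\partial_x^2$. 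The hypothesis that the $b$-equation coincides with this Euler equation therefore becomes the pointwise identity
\[ A^{-1}\big[2(Au)u_x+u(Au)_x\big] = P^{-1}\big[u(Pu)_x + b(Pu)u_x\big]\qquad\text{for all }u\in\CS . \]
Polarising this quadratic identity and applying $A$ yields a bilinear identity $N_A(u,v)=A\,P^{-1}N_P^b(u,v)$, where $N_A$ and $N_P^b$ denote the symmetric bilinearizations of the two nonlinearities; crucially, this identity is \emph{linear} in the unknown operator $A$.

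The heart of the argument is to extract $A$ by a careful choice of test functions. Testing with $v\equiv 1$, and first noting that the choice $u=v\equiv 1$ forces $A1=:c$ to be a constant, collapses the bilinear identity to the linear relation
\[ \tfrac12(Au)_x + c\,u_x = \tfrac12\, A u_x + \tfrac b2\, AP^{-1}u_x . \]
Evaluating this on $u=e^{inx}$ turns it into a \emph{first-order linear ODE} for the smooth function $a_n:=Ae^{inx}$, namely $a_n' = i\kappa_n a_n - 2cin\,e^{inx}$ with $\kappa_n=n\big(1+b/(1+n^2)\big)$. Its general solution is $a_n(x)=\tfrac{2c(1+n^2)}{b}\,e^{inx}+C_n e^{i\kappa_n x}$; since $a_n=Ae^{inx}$ must be a smooth function on $\Circle$ while $e^{i\kappa_n x}$ is not whenever $\kappa_n$ is not an integer, one is forced to take $C_n=0$ for all large $|n|$. (The case $b=0$ is excluded separately here, since it would force $c=0$, contradicting injectivity of $A$.) Hence $A$ acts as the Fourier multiplier $n\mapsto \tfrac{2c}{b}(1+n^2)$ on all but finitely many modes.

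It remains to pin down $b$ and the exceptional modes. Feeding two high modes $e^{i\alpha x},e^{i\beta x}$ — with $\alpha,\beta,\alpha+\beta$ all in the multiplier range — into the full bilinear identity and comparing coefficients reduces to the scalar relation $\big(\beta(1+\alpha^2)+\alpha(1+\beta^2)\big)(1-b/2)=0$, which forces $b=2$. With $b=2$ the only modes satisfying $\kappa_n\in\mathbb{Z}$ are $n\in\{0,\pm1\}$: the mode $n=0$ already fits the pattern, while for $n=\pm1$ the residual off-diagonal coefficient $C_{\pm1}$ is killed by the $L_2$-symmetry of $A$ (pair $Ae^{\pm ix}$ against $e^{\pm 2ix}$ and use that $A$ already acts as the multiplier $5c$ on the mode $\pm2$). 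This gives $Ae^{inx}=c(1+n^2)e^{inx}$ for every $n$, i.e. $A=c\,(\id-\partial_x^2)$; since the Euler equation is invariant under $A\mapsto \mu A$, we normalise $c=1$ to conclude $A=\id-\partial_x^2$.

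The main obstacle is exactly the first reduction: because $A$ is only assumed to be a symmetric isomorphism of $\CS$, one may not presume it is translation invariant, so a frontal attack must control all off-diagonal matrix elements simultaneously. The device that resolves this is the observation that testing against the constant function linearises the identity into the ODE above, whose periodicity constraint on $\Circle$ produces the Fourier-multiplier structure for free on almost all modes; the remaining quadratic information then fixes $b=2$, and the symmetry of $A$ disposes of the finitely many low modes.
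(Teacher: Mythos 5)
Your deduction of the Corollary from the Theorem is exactly the paper's (the Corollary is immediate since $b=3\neq 2$), and your proof of the Theorem is correct, but its second half takes a genuinely different route. The first half coincides with the paper's step (a): your polarization against $v\equiv\mathbf{1}$ is just a cleaner packaging of the paper's substitution $u\mapsto u+\lambda$, $\lambda\to\infty$; both arrive at the same ODE $v'-i\alpha_n v=-2inu_n$ for $Au_n$, the same exclusion of $b=0$, and the same solution structure $Au_n=\beta_n u_n+\gamma_n e^{i\alpha_n x}$ with $\beta_n=2(1+n^2)/b$. From there the paper argues by dichotomy: step (b) assumes all off-diagonal coefficients $\gamma_n$ vanish, so that $A$ is a Fourier multiplier commuting with $L$, and gets $b=2$, $A=L$; step (c) assumes some $\gamma_p\neq 0$ and derives a contradiction through a number-theoretic analysis (periodicity forces $\alpha_p\in\mathbb{Z}$, hence $b=k(1+p^2)/p$; symmetry of $A$ yields the Diophantine relation $2p^3+3p^2k+pk+2p+k=0$, whence $k=pl$ with $l=-2$ and $b=-2(1+p^2)$; a final substitution of $u_p$ kills $\gamma_p$). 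You avoid case (c) entirely by reordering the argument: since $\alpha_n-n=bn/(1+n^2)\to 0$, the off-diagonal terms vanish automatically on all sufficiently high modes, so testing the full bilinear identity with high modes $\alpha,\beta,\alpha+\beta$ in the multiplier range already forces $b=2$ unconditionally; only then do you look at the finitely many possible exceptional modes, which for $b=2$ are just $n=\pm 1$, and these are disposed of by the $L_2$-symmetry of $A$ paired against the known action on modes $\pm 2$. Your organization is shorter and eliminates the Diophantine computation; what the paper's route yields in exchange is the sharper intermediate statement that a nonzero off-diagonal coefficient at mode $p$ would pin down $b=-2(1+p^2)$, information your argument never needs. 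One small point both treatments share: since the Euler equation is invariant under $A\mapsto\mu A$, the conclusion $A=\mathrm{id}-\partial_x^2$ holds only after the normalization $A\mathbf{1}=\mathbf{1}$ (the paper's ``scaling'' remark, your choice $c=1$), so your explicit flagging of this is consistent with, and slightly more careful than, the paper.
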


\begin{proof}[Proof of Theorem~{\rm\ref{main}}] 
Let $b\in\mathbb{R}$ be given and assume that the $b\,$-equation is the Euler equation on $\DiffS$ with respect to $\rho_A$. 
Letting $L=1-\partial_x^2$,  we then get
\begin{equation}\label{b-euler}
 A^{-1}\Big(2(Au)u_x+u(Au)_x\Big)=L^{-1}\Big(b(Lu)u_x+u(Lu)_x\Big)
 \end{equation}
for all $u\in\CS$. 

\medskip

(a) Let $\mathbf{1}$ denote the constant function with value $1$. Choosing $u=\mathbf{1}$ in \eqref{b-euler}, we get
$A^{-1}(\mathbf{1}(A\mathbf{1})_x)=0$ and hence $(A\mathbf{1})_x=0$, i.e.\ $A\mathbf{1}$ is constant. Scaling \eqref{b-euler}, 
we may assume that $A\mathbf{1}=\mathbf{1}$. Next we replace $u$ by  $u+\lambda$ in \eqref{b-euler}. 
Then we find for the left-hand side that
\begin{align*}
 \frac{1}{\lambda}A^{-1}&
 \Big(2\big(A(u+\lambda)\big)(u+\lambda)_x+(u+\lambda)\big(A(u+\lambda)\big)_x\Big)\\
 =&\frac{1}{\lambda}A^{-1}\Big(2\big((Au)+\lambda\big)u_x+(u+\lambda)(Au)_x\Big)\\
 =&A^{-1}\Big(\frac{2(Au)u_x+u(Au)_x}{\lambda}+2u_x+(Au)_x\Big)\\
 &\xrightarrow{\lambda\to\infty}A^{-1}\big(2u_x+(Au)_x\big),
\end{align*} 
and similarly for the right-hand side:
\begin{align*}
 \frac{1}{\lambda}L^{-1}&
 \Big(b\big(L(u+\lambda)\big)(u+\lambda)_x+(u+\lambda)\big(L(u+\lambda)\big)_x\Big)\\
 &\xrightarrow{\lambda\to\infty}L^{-1}\big(bu_x+(Lu)_x\big).
\end{align*} 
Combing these limits, we conclude that
\begin{equation}\label{eq-lim}
 A^{-1}\big(2u_x+(Au)_x\big)=L^{-1}\big(bu_x+(Lu)_x\big)
\end{equation} 
for all $u\in\CS$. Setting $u_n=e^{inx}$, we find that 
\begin{equation*}
 L^{-1}\big(b(u_n)_x+(Lu_n)_x\big)=i\alpha_n u_n,
\end{equation*}
where $\alpha_n:=n+\left(b{n}/(1+n^2)\right)$. Applying $A$ to \eqref{eq-lim} with $u=u_n$ 
thus yields 
\begin{equation*}
 2inu_n+(Au_n)_x=i\alpha_nAu_n. 
\end{equation*}
Therefore $v_n:=Au_n$ solves the ordinary differential equation 
\begin{equation}\label{ODE}
 v^\prime-i\alpha_nv=-2in u_n.
\end{equation}
For $n\ne0$, let us solve \eqref{ODE} explicitly. Assume first that $b=0$. Then 
 $$v(x)=(c-2inx)u_n$$%e^{inx}$$ 
for some constant $c$. But this function is never $2\pi$-periodic. Thus we must have $b\ne 0$. However, in this case
\begin{equation}\label{solu}
 v_n=Au_n=\gamma_ne^{i\alpha_nx}+\beta_n u_n\qquad (n\ne0)%e^{inx}   
\end{equation} 
with $\beta_n=\frac{2(1+n^2)}{b}$ and suitable constants $\gamma_n$. 

\medskip
(b) Assume that all $\gamma_n$ vanish, i.e.\ $Au_n=\beta_n u_n$ for all $n\ne0$ and $A\mathbf{1}=\mathbf{1}$. 
In particular, $A$ is a Fourier multiplication operator and thus commutes with $L$. Therefore we can write 
\eqref{b-euler} as 
\begin{equation*}
 L\Big(2(Au)u_x+u(Au)_x\Big)=A\Big(b(Lu)u_x+u(Lu)_x\Big).
\end{equation*}
Inserting $u=u_n$ a direct computation yields
\begin{equation*}
 3(1+4n^2)\beta_n=(1+b)(1+n^2)\beta_{2n}. 
\end{equation*}
Using that $\beta_n=2(1+n^2)/b$, this is equivalent to $b=2$. Then $\beta_n=1+n^2$ and therefore 
$A$ coincides with $L$.  

\medskip

(c) Let us assume there is a $p\in\mathbb{Z}\setminus\{0\}$ such that $\gamma_{p}\ne 0$. 
We shall derive a contradiction. Since $v_p=Au_p$ must be $2\pi$-periodic, $\alpha_p$ is an integer. 
This implies that $b=k(1+p^2)/p$ for some non-zero integer $k$. We set
\begin{equation}\label{m=an}
 m:=\alpha_{p}.
\end{equation}
Observe that $m\ne p$, since $b\ne 0$. Thus $(u_p\vert u_m)_{L_2}=0$ and \eqref{solu} implies that
\begin{equation*}
 (Au_p\vert\, u_m)_{L_2}=(\gamma_p\, e^{imx}\vert\, u_m)_{L_2}=\gamma_p.
\end{equation*}
By the symmetry of $A$ we also find that
\begin{equation*}
 \gamma_p=(A u_p\vert\, u_m)_{L_2}=(u_p\vert\, A u_m)_{L_2}=\overline{\gamma_m}(u_p\vert\, e^{i\alpha_mx})_{L_2}.
\end{equation*}
Since $\gamma_p\ne0$ we must have $\gamma_m\ne0$. Again by periodicity we conclude that $\alpha_m\in \mathbb{Z}$. 
But then $\alpha_m=p$, since otherwise we would have $(u_p\vert e^{i\alpha_m x})_{L_2}=0$ and thus again $\gamma_p=0$. 
We know already that $b=k(1+p^2)/p$. 
Thus $m=\alpha_p=p+k$ by \eqref{m=an} and the definition $\alpha_p$. Now we calculate
\begin{align*}
 p&=\alpha_m=\alpha_{p+k}=p+k+b\frac{p+k}{1+(p+k)^2}\\
  &= p+k+\frac{k(1+p^2)}{p}\cdot\frac{p+k}{1+(p+k)^2},
\end{align*}
and we find
\begin{equation*}
 p\left(1+(p+k)^2\right)k+k(1+p^2)(p+k)=0.
\end{equation*}
Observing that $k\ne 0$, an elementary calculation yields
\begin{equation}\label{rel}
 2 p^3 + 3 p^2k+pk+2p+k=0.
\end{equation}
From this we conclude that there is an $l\in\mathbb{Z}$ such that $k=p\,l$. With this we infer from \eqref{rel}  that
\begin{equation*}
 (l+2)\left((l+1)p^2+1\right)=0.
\end{equation*}
The only integer solution of this equation is $l=-2$. In fact, the solution $l=-\frac{1}{p^2}-1$ is only possible if 
$p^2=1$ and thus again $l=-2$, since for $p^2\ne 1$ we have $l\not\in\mathbb{Z}$. Therefore $b=-2(1+p^2)$ and thus $\alpha_p=-p$. 

Moreover, we can conclude that $\gamma_n=0$ whenever $n\ne0$ does not coincide with $p$ or $-p$, since otherwise the same calculation 
as before would show $b=-2(1+n^2)$ contradicting $b=-2(1+p^2)$. 

Now insert $u=u_p$ in \eqref{b-euler}. The left-hand side 
then equals 
 $$A^{-1}\Big(ip\gamma_p\mathbf{1}-3ipu_{2p}\Big)=ip\gamma_p\mathbf{1}-\frac{3ip}{\beta_{2p}}u_{2p};$$
for the latter identity note that $2p$ does not coincide with $p$ or $-p$, so that $\gamma_{2p}=0$, and hence $A^{-1}u_{2p}=u_{2p}/\beta_{2p}$. 
Note also that $\beta_p=-1$. For the right-hand side we get 
 $$i(1+b)\frac{p(1+p^2)}{1+4p^2}u_{2p}.$$
Comparing both expressions we conclude that $p\gamma_p=0$ which is a contradiction to $p\ne0$ and $\gamma_p\ne0$. 
\end{proof} 

\bibliographystyle{amsplain}

\bibliography{JEJS}

\end{document}